\theoremstyle{plain}
 \newtheorem{theorem}{Theorem}
 \newtheorem{proposition}[theorem]{Proposition}
 \newtheorem{corollary}[theorem]{Corollary}
 \newtheorem{lemma} [theorem] {Lemma}
 \newtheorem{remark}[theorem]{Remark}
\begin{document}

\title{Continuity of spectral radius and type I $C^*$-algebras}
\author{Tatiana Shulman}
\address{Department of Mathematical Physics and Differential Geometry, Institute of Mathematics of Polish Academy of Sciences, Warsaw}
\thanks{The research of the
 author was supported by the Polish National Science Centre grant
under the contract number DEC- 2012/06/A/ST1/00256, by the grant H2020-MSCA-RISE-2015-691246-QUANTUM DYNAMICS and Polish Government grant 3542/H2020/2016/2,  and from the Eric Nordgren
Research Fellowship Fund at the University of New Hampshire.}

\subjclass[2010]{46L05}



\keywords{Spectral radius, nilpotent elements, type I $C^*$-algebras, projectivity}

\begin{abstract}

It is shown that the  spectral radius is continuous on a $C^*$-algebra if and only if the $C^*$-algebra is type I.
This answers a question of V. Shulman and Yu.~Turovskii \cite{PapaYura}. It is shown  also
that the closure of nilpotents in a $C^*$-algebra contains an element with non-zero spectrum if and only if  the $C^*$-algebra is not
 type I.

 \end{abstract}
\maketitle

\section{Introduction}
Let $H$ be a Hilbert space and $B(H)$ the $C^*$-algebra of all bounded operators on $H$.
 As is well-known, the
  spectral radius is not a continuous function on  $B(H)$, it is only upper
  semi-continuous (see \cite{Halmos}, Solutions 103 and
  104).
However it is not hard to prove that on the $C^*$-algebra $K(H)$ of all compact operators it is continuous. Thus there arises a question on which $C^*$-algebras the spectral radius function is continuous. In \cite{PapaYura} V. Shulman and Yu. Turovskii proved that it is continuous on any type I $C^*$-algebra and asked if the converse is true (\cite{PapaYura}, Problem 9.20).
In this short paper we answer this question in the affirmative.

\medskip

{\bf Theorem A.} {\it Let $\mathcal A$ be a $C^*$-algebra. The spectral radius is a continuous function on $\mathcal A$ if and only if $\mathcal A$ is type I.}

\medskip

Theorem A adds one more (analytical) characterization to numerous characterizations of type I $C^*$-algebras.


One of  famous questions of P. Halmos asks: "What is the closure of the nilpotent
operators on a complex, separable, infinite-dimensional Hilbert space?"
It was completely
 answered  in \cite{AFV} (see also \cite{Herrero}). In particular  the answer shows that in the closure of nilpotent  operators
 there  are many
   operators with non-zero spectrum.  In the recent work  \cite{Skoufranis} P. Skoufranis  considered similar question for $C^*$-algebras with focus on normal limits of nilpotents.  His results show in particular that in many $C^*$-algebras (for example in all UHF-algebras and all purely infinite $C^*$-algebras) the closure of nilpotents contains  elements with non-zero spectrum. Here we give a characterization of such $C^*$-algebras.

\medskip

{\bf
Theorem  B.}  {\it
The closure of nilpotents in a $C^*$-algebra $\mathcal A$ contains an element with non-zero spectrum if and only if $\mathcal A$ is not type I.}

\section{Preliminaries}

\subsection{AF-telescopes and projectivity}

Let $A=\overline{\bigcup{A_n}}$ be an inductive limit of a sequence of $C^\ast$-algebras $$A_1 \subset A_2 \subset \ldots \subset A$$
with injective connecting maps. In \cite{LorPed}  the {\it mapping telescope} of $(A_n)$ was defined as the $C^\ast$-algebra
\begin{align*}
    T(A)=\{f\in C_0\left((0,\infty],A\right)|\; t\le n \Rightarrow f(t)\in A_n\}.
\end{align*}
Clearly the mapping telescope depends on the sequence $(A_n)$, but nevertheless we will use the notation $T(A)$. When each $A_n$ is finite-dimensional, the $C^\ast$-algebra $A$ is AF, and  $T(A)$ is called an {\it AF-telescope}.
 In particular, we denote by $T({M}_{2^\infty})$ the mapping telescope corresponding to the inductive sequence
\begin{align*}
    {M}_2\subset {M}_4\subset...\subset {M}_{2^n}\subset...\subset {M}_{2^\infty}
\end{align*}
where ${M}_{2^n}$ is identified with a subalgebra of ${M}_{2^{n+1}}$ by the map $a\mapsto a\oplus a$. Recall that ${M}_{2^\infty}$ is referred to as the CAR algebra (\cite{Davidson}).

\medskip

Recall that a $C^*$-algebra $A$ is {\it projective} (\cite{B1}, \cite{L}) if for any $C^\ast$-algebras $B$ and $C$ with surjective $\ast$-homomorphism $q:B\to C$, any $\ast$-homomorphism $\phi:A\to C$ lifts to a $\ast$-homomorphism $\psi:A\to B$ such that $q\circ\psi=\phi$. In other words, we have the following commutative diagram.

$$\xymatrix { & B \ar@{->>}[d]^-{q} \\ A  \ar[r]_-{\phi}  \ar@{-->}[ur]^{\psi}  & C}$$

In \cite{LorPed} Loring and Pedersen proved that AF-telescopes are projective. This fact will be crucial for the proof of Theorem B.  Some other applications of projectivity of AF-telescopes can be found in the recent work of Kristin Courtney and the author \cite{KristinTanya}.

\subsection{ Type I $C^*$-algebras}
Let $H$ be a Hilbert space.  A $C^*$-algebra $\mathcal A$ is {\it type I} (or, equivalently,  {\it GCR}) if $$K(H)\subseteq \pi(\mathcal A)$$ for any irreducible representation $(\pi, H)$ of $\mathcal A$.

A $C^*$-algebra $\mathcal D$ is a {\it subquotient} of a $C^*$-algebra $\mathcal A$ if there is a $C^*$-subalgebra $\mathcal B$ of $\mathcal A$ and a surjective $\ast$-homomorphism $q: \mathcal B \to \mathcal D$.

 We will need the following property (in fact a characterization) of non-type I $C^*$-algebras, which is due to Glimm for separable case \cite{Glimm} and Sakai for the general case \cite{Sakai}.

\begin{theorem}\label{GlimmTheorem} (Glimm, Sakai) If a $C^*$-algebra is non-type I, then it has a subquotient isomorphic to the CAR algebra.
\end{theorem}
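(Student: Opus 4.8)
The plan is to construct the subquotient directly, following Glimm's dichotomy, and to handle the non-separable case by reduction as Sakai did. Since a $C^*$-subalgebra of a type I $C^*$-algebra is again type I, and since the failure of the type I (GCR) conditions is witnessed by countably many elements, a non-type-I $\mathcal A$ contains a separable non-type-I $C^*$-subalgebra; a subquotient of the latter is a subquotient of $\mathcal A$, so I may assume $\mathcal A$ separable, which is Glimm's setting (the general reduction being Sakai's contribution). Next, since $\mathcal A$ is not postliminal it admits a nonzero \emph{antiliminal} quotient $\mathcal A/\mathcal I$ (one with no nonzero liminal ideal, e.g. the quotient by the largest GCR ideal); as this quotient is itself a subquotient of $\mathcal A$, I may further assume $\mathcal A$ antiliminal.

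The point of passing to the antiliminal case is that then no irreducible representation $(\pi,H)$ of $\mathcal A$ contains a nonzero compact operator, i.e. $\pi(\mathcal A)\cap K(H)=\{0\}$ for every irreducible $\pi$. Consequently no projection seen in such a representation is minimal: every spectral piece can be split, and orthogonal pieces can be intertwined by partial isometries. This absence of minimal projections is exactly the resource that powers a \emph{doubling lemma}: given a finite system of approximate $2^n\times 2^n$ matrix units $\{f^{(n)}_{ij}\}_{i,j=1}^{2^n}$ in $\mathcal A$ and an $\varepsilon_n>0$, one can refine it to an approximate $2^{n+1}\times 2^{n+1}$ system compatible, up to norm error $\varepsilon_n$, with the CAR embedding $a\mapsto a\oplus a$ of $M_{2^n}$ into $M_{2^{n+1}}$.

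Iterating this lemma with $\sum_n \varepsilon_n<\infty$ produces a coherent family $\{f^{(n)}_{ij}\}$ of approximate matrix units; let $\mathcal B\subseteq\mathcal A$ be the $C^*$-algebra they generate. The deviations of the $f^{(n)}_{ij}$ from the exact matrix-unit and compatibility relations tend to zero, and I would collect them into a closed two-sided ideal $\mathcal J\subseteq\mathcal B$ so chosen that in the quotient $\mathcal B/\mathcal J$ the images of the $f^{(n)}_{ij}$ satisfy the exact relations. The images then generate $\varinjlim M_{2^n}$ with connecting maps $a\mapsto a\oplus a$, namely the CAR algebra $M_{2^\infty}$; since $\mathcal B/\mathcal J$ is a subquotient of $\mathcal A$ (and subquotients compose), this furnishes the desired copy of $M_{2^\infty}$.

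The main obstacle is the combination of the doubling lemma with the passage to an honest quotient. Summable control of the approximations is routine, but arranging a single ideal $\mathcal J$ that simultaneously exactifies the matrix-unit relations at every level — so that $\mathcal B/\mathcal J$ is precisely $M_{2^\infty}$ rather than merely an algebra surjecting onto it — is delicate, and it is here that the antiliminal (non-GCR) hypothesis is genuinely consumed. This analytic core is Glimm's; Sakai's role is the reduction that removes the separability assumption.
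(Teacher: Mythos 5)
The first thing to say is that the paper does not prove this theorem at all: it is imported as a known result, with Glimm credited for the separable case and Sakai for the general one, so your proposal has to be judged as a free-standing reconstruction of the classical argument. As such, it is an outline rather than a proof. The two steps that carry all of the analytic content --- the ``doubling lemma'' (refining an approximate system of $2^n\times 2^n$ matrix units to a compatible $2^{n+1}\times 2^{n+1}$ system with controlled error, using non-type-I-ness), and the production of a proper closed ideal $\mathcal J\subseteq\mathcal B$ modulo which all the approximate relations become simultaneously exact --- are named, called ``delicate,'' and left entirely unexecuted; these are precisely the hard parts of Glimm's paper. Note in particular that for the quotient step you must also guarantee $\mathcal J\neq\mathcal B$, i.e.\ that the matrix units stay uniformly away from the ideal you divide out: nothing in your summability remark prevents the images of the $f^{(n)}_{ij}$ in $\mathcal B/\mathcal J$ from all being $0$, in which case $\mathcal B/\mathcal J$ is not the CAR algebra. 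Glimm's estimates are designed exactly to rule this out.

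Beyond incompleteness, two steps you do assert are wrong or unjustified. (i) The claim that an antiliminal $C^*$-algebra has no irreducible representation containing a nonzero compact operator is false. Let $I=M_{2^\infty}\otimes K(H)$ and let $\mathcal A$ be an essential extension $0\to I\to\mathcal A\to K(H)\to 0$; such extensions exist because the corona algebra $M(I)/I$ contains a copy of the Calkin algebra, hence of $K(H)$, and one pulls back along an injective Busby map. Since $I$ is simple, essential and non-liminal, every nonzero closed ideal of $\mathcal A$ contains $I$, so $\mathcal A$ is antiliminal --- indeed its largest GCR ideal is $0$, so $\mathcal A$ coincides with its own ``antiliminal quotient'' --- and yet the quotient map $\mathcal A\to K(H)\subseteq B(H)$ is an irreducible representation whose image consists of compact operators. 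What is true, and all that the construction actually needs, is that a non-GCR algebra has at least one irreducible representation $\pi$ with $\pi(\mathcal A)\cap K(H)=0$ (this follows directly from the definition plus the standard dichotomy $\pi(\mathcal A)\cap K(H)\in\{0,K(H)\}$, with no antiliminal reduction at all); Glimm's spectral surgery is then carried out relative to that single representation. (ii) The reduction to the separable case is asserted in one line (``the failure of the type I conditions is witnessed by countably many elements''), but type I-ness is defined through the totality of irreducible representations of the ambient algebra, and its failure is not, on its face, a countably witnessed property. The standard proof that a non-type-I $C^*$-algebra contains a separable non-type-I $C^*$-subalgebra is as a \emph{corollary} of the Glimm--Sakai theorem (the CAR subquotient sits inside a separable subalgebra, and type I-ness passes to subalgebras and quotients), so invoking it as your first step is circular. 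Sakai's removal of separability was a substantial piece of work involving the enveloping von Neumann algebra, not a soft reduction of this kind.
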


\section{Proof of Theorems A and B}

We start by constructing a sequence of nilpotent elements in the CAR-algebra $\mathbb{M}_{2^\infty}$ that  converges to  an element with positive spectral radius.

Let $\epsilon_n = 1/{2^n}, $ $n\ge 0$.
Define matrices $A_n\in M_{2^n}$ as follows: $$A_1 = \left(\begin{array}{cc} 0 & 0\\ \epsilon_0 & 0\end{array}\right)$$

 $$A_2 = \left(\begin{array}{cccc} 0 & &  &\\ \epsilon_0 & 0 &&\\ &\epsilon_1&0&\\&&\epsilon_0 &0\end{array}\right)$$

 $$A_3 = \left(\begin{array}{cccccccc} 0 & &  & &&&&\\ \epsilon_0 & 0 &&&&&&\\ &\epsilon_1&0&&&&&\\&&\epsilon_0 &0&&&&\\
 &&&\epsilon_2 &0 &&&\\ &&&&\epsilon_0&0&&\\ &&&&&\epsilon_1&0&\\ &&&&&&\epsilon_0&0\end{array}\right)$$ and so on.
In other words, to obtain $A_{k+1}$ we take  $A_k\oplus A_k$ and put $\epsilon_k$ in the middle of the first diagonal under the main diagonal.

Then in the first diagonal under the main one $A_n$
 has the first $2^n-1$ elements of the so-called "the abacaba order" sequence
 \begin{equation}\label{zvezdochka}  \epsilon_0, \epsilon_1, \epsilon_0, \epsilon_2, \epsilon_0, \epsilon_1, \epsilon_0, \epsilon_3, \ldots
\end{equation} (here each second term equals $\epsilon_0$, each second of the remaining terms  equals  $\epsilon_1$, and so on)   and all the other entries are zeros.

 Below we will identify matrices from $M_{2^n}$ with their images in $M_{2^{n+1}}$ and in $M_{2^{\infty}}$.

\medskip

 The construction of the sequence of elements of $M(2^{\infty})$ above was inspired by Kakutani's construction of a  sequence of nilpotent operators in his remarkable proof of discontinuity
 of spectral radius on $B(H)$ (see \cite{Halmos}, Solution 104).

 \begin{lemma}\label{1} The sequence $A_n\in M_{2^{\infty}}$, $n\in \mathbb N$, converges to some $A\in M_{2^{\infty}}$ such that $\rho(A) >0$.
 \end{lemma}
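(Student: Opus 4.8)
The plan is to prove two things: that the sequence $(A_n)$ is Cauchy in $M_{2^\infty}$, and that its limit $A$ satisfies $\rho(A)\ge 1/2$. For convergence I would use the recursive construction directly. Under the identification of $A_n\in M_{2^n}$ with $A_n\oplus A_n\in M_{2^{n+1}}$, the matrix $A_{n+1}$ differs from $A_n$ only by the one new entry $\epsilon_n$ inserted in the middle of the first subdiagonal, namely at position $(2^n+1,\,2^n)$, which is a block-junction entry equal to $0$ in $A_n\oplus A_n$. Thus $A_{n+1}-A_n=\epsilon_n E_{2^n+1,\,2^n}$ is a scalar multiple of a single matrix unit, so $\|A_{n+1}-A_n\|=\epsilon_n=2^{-n}$. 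Since $\sum_n 2^{-n}<\infty$, the sequence is Cauchy and converges in norm to some $A\in M_{2^\infty}$.

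For the lower bound on the spectral radius, the key observation is that each $A_n$ is the weighted shift whose subdiagonal is the truncated abacaba sequence \eqref{zvezdochka}: the weight in position $m$ equals $\epsilon_{v_2(m)}$, where $v_2(m)$ denotes the $2$-adic valuation of $m$ (this is exactly the recursion that inserts $\epsilon_k$ between two copies of the subdiagonal of $A_k$). Because $A_n$ is strictly lower bidiagonal, $A_n^k$ is supported on the $k$-th subdiagonal, and the $(k+1,1)$-entry of $A_n^k$ is the product of the first $k$ weights,
\begin{equation*}
(A_n^k)_{k+1,1}=\prod_{m=1}^{k}\epsilon_{v_2(m)}=2^{-\sum_{m=1}^{k}v_2(m)}=2^{-v_2(k!)}.
\end{equation*}
Since $\ast$-embeddings are isometric and $B\mapsto B^k$ is norm-continuous on bounded sets, $\|A^k\|=\lim_n\|A_n^k\|$; as the operator norm dominates any single matrix entry, this yields $\|A^k\|\ge 2^{-v_2(k!)}$.

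To finish I would estimate $v_2(k!)=\sum_{j\ge 1}\lfloor k/2^j\rfloor\le\sum_{j\ge 1}k/2^j=k$, whence $\|A^k\|\ge 2^{-k}$ and $\|A^k\|^{1/k}\ge 1/2$ for every $k$. By Gelfand's formula,
\begin{equation*}
\rho(A)=\lim_{k}\|A^k\|^{1/k}=\inf_{k}\|A^k\|^{1/k}\ge \tfrac12>0,
\end{equation*}
which proves the lemma. (A finer analysis, checking that no window of $k$ consecutive abacaba weights has product exceeding $w_1\cdots w_k$, gives $\rho(A)=\tfrac12$ exactly, but only the lower bound is needed here.)

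The main obstacle is the bookkeeping that identifies the subdiagonal of $A_n$ with the ruler/abacaba sequence and thereby converts $\|A^k\|$ into the arithmetic quantity $2^{-v_2(k!)}$; once this translation is in place, the bound $v_2(k!)\le k$ and Gelfand's formula finish the argument at once. A secondary point requiring care is the passage from the finite matrices $A_n$ to the limit $A$: one must justify $\|A^k\|=\lim_n\|A_n^k\|$, via continuity of the $k$-th power on bounded sets together with the isometry of the inclusions $M_{2^n}\hookrightarrow M_{2^\infty}$, before reading off the lower bound from the finite-dimensional entry computation.
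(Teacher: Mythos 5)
Your proof is correct, and its skeleton matches the paper's: convergence via the telescoping estimate $\|A_{n+1}-A_n\|=\epsilon_n$, a lower bound on $\|A_n^k\|$ coming from the product of the first $k$ subdiagonal weights, passage to the limit $\|A^k\|=\lim_n\|A_n^k\|$, and Gelfand's formula. The one genuine difference is how the key estimate on the weight products is obtained. The paper computes $\|A_n^k\|$ as the maximum over all windows of $k$ consecutive weights of the sequence (\ref{zvezdochka}), restricts to the initial window, and then simply cites Kakutani's computation (Halmos, Solution 104) for $\liminf_k|\alpha_{k-1}\cdots\alpha_0|^{1/k}>0$. You instead make this step self-contained: identifying the $m$-th weight as $\epsilon_{v_2(m)}=2^{-v_2(m)}$ (the ruler-sequence description of the abacaba order) turns the initial window product into $2^{-v_2(k!)}$, and Legendre's formula $v_2(k!)=\sum_{j\ge1}\lfloor k/2^j\rfloor\le k$ gives the explicit bound $\|A^k\|^{1/k}\ge 1/2$, hence $\rho(A)\ge\tfrac12$. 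What your route buys is a fully self-contained argument with an explicit numerical lower bound on the spectral radius; what the paper's route buys is brevity, outsourcing the arithmetic to a classical reference. Two minor points: the entry bound $\|A_n^k\|\ge|(A_n^k)_{k+1,1}|$ of course requires $2^n\ge k+1$, which holds for all large $n$ and is all the limit needs; and your parenthetical claim that $\rho(A)=\tfrac12$ exactly is unnecessary for the lemma and would require the window analysis you defer, so it is best left as the aside you made it.
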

 \begin{proof} We have $$\|A_1 - A_2\| = \| \left(\begin{array}{cccc} 0 & &  &\\ \epsilon_0 & 0 &&\\ &&0&\\&&\epsilon_0 &0\end{array}\right) -  \left(\begin{array}{cccc} 0 & &  &\\ \epsilon_0 & 0 &&\\ &\epsilon_1&0&\\&&\epsilon_0 &0\end{array}\right)\| = \| \left(\begin{array}{cccc} 0 & &  &\\ 0 & 0 &&\\ &\epsilon_1&0&\\&&0 &0\end{array}\right)\| = \epsilon_1,$$
 $$\|A_2 - A_3\| = \|\left(\begin{array}{cccccccc} 0 & &  & &&&&\\ \epsilon_0 & 0 &&&&&&\\ &\epsilon_1&0&&&&&\\&&\epsilon_0 &0&&&&\\
 &&& &0 &&&\\ &&&&\epsilon_0&0&&\\ &&&&&\epsilon_1&0&\\ &&&&&&\epsilon_0&0\end{array}\right) - \left(\begin{array}{cccccccc} 0 & &  & &&&&\\ \epsilon_0 & 0 &&&&&&\\ &\epsilon_1&0&&&&&\\&&\epsilon_0 &0&&&&\\
 &&&\epsilon_2 &0 &&&\\ &&&&\epsilon_0&0&&\\ &&&&&\epsilon_1&0&\\ &&&&&&\epsilon_0&0\end{array}\right)\| = \epsilon_2,$$ and, similarly, $$\|A_n - A_{n+1}\| = \epsilon_n.$$  Hence there is $A\in M(2^{\infty})$ such that \begin{equation}\label{eq0}A_n\to A.\end{equation}

 Let us denote by $\alpha_i$ the i-th element of the sequence (\ref{zvezdochka}). Then for each $n\in \mathbb N$ $$A_n = \left(\begin{array}{ccccc} 0&&&&\\\alpha_0&0&&&\\&\alpha_1& 0&&\\&&\ddots&&\\&&&\alpha_{2^n-2}&0\end{array}\right).$$ The corresponding basis vectors  will be denoted by $e_i$'s.

 Fix k. Then for any $n$ such that $2^n >k$ we have $$A_n^k e_i = \alpha_{i+k-2}\alpha_{i+k-3} \ldots \alpha_{i-1} e_{i+k},$$ when $i\le 2^n-k$ and $A_n^k e_i = 0$, when $i> 2^n - k.$ Hence $$\|A_n^k   \|= \max_{1\le i\le 2^n -k} |\alpha_{i+k-2}\alpha_{i+k-3} \ldots \alpha_{i-1} |.$$ Then \begin{equation}\label{eq1}\|A_n^k\|^{1/k}  = \max_{1\le i\le 2^n -k} |\alpha_{i+k-2}\alpha_{i+k-3} \ldots \alpha_{i-1} |^{1/k} \ge |\alpha_{k-1}\alpha_{k-2} \ldots \alpha_0|^{1/k}\end{equation} for any $n\in \mathbb N$. It was proved by Kakutani  [see \cite{Halmos}, Solution 104]  that
 \begin{equation}\label{eq2} \liminf_k |\alpha_{k-1}\alpha_{k-2} \ldots \alpha_0|^{1/k} > 0. \end{equation}
 By (\ref{eq0}), (\ref{eq1}) and (\ref{eq2}) $$\rho(A) = \lim_k\|A^k\|^{1/k} = \lim_k \lim_n \|A_n^k\|^{1/k} \ge \liminf_k |\alpha_{k-1}\alpha_{k-2} \ldots \alpha_0|^{1/k} > 0.$$

 \end{proof}

\begin{remark} There exist other constructions than the one above, which demonstrate discontinuity of spectral radius in $M(2^{\infty})$ (see \cite{Skoufranis}). However the one above is useful for the proof of Theorem B.
\end{remark}

 \bigskip

{\it Proof of Theorem B.}
 The "only if" part follows from continuity of spectral radius on type I $C^*$-algebras [\cite{PapaYura}, Cor. 9.18]. To prove the "if" part, assume $\mathcal A$ is not type I. It will be sufficient to construct a sequence of nilpotent elements in $\mathcal A$ converging to an element with a positive spectral radius.  By Theorem \ref{GlimmTheorem} there is a $C^*$-subalgebra $\mathcal B\subseteq \mathcal A$ and a surjective $\ast$-homomorphism $q: \mathcal B \to M(2^{\infty})$. Let $ev_{\infty}: T(M(2^{\infty})) \to M(2^{\infty})$ be the evaluation at infinity map.

$$\xymatrix { A  & B  \ar@_{{(}->}[l] \ar@{->>}[d]_{q}& \\  & M(2^{\infty}) & T(M(2^{\infty})) \ar[l]_{ev_{\infty}} \ar@{.>}[ul]_{\phi}&
}$$
Since AF-telescopes are projective (\cite{LorPed}), there is $\phi: T(M(2^{\infty})) \to B\subseteq A$ such that \begin{equation}\label{eq5}q\circ \phi = ev_{\infty}.\end{equation}

 Let $A_n$'s and $A$ be as in Lemma \ref{1}. For each $t\in [k, k+1]$, $k\in \mathbb N$, let $A_t$ be the linear path connecting $A_k$ and $A_{k+1}$, and for each $t\in (0, 1]$ let $A_t$ be the linear path connecting $0$ and $A_{1}$. Define $f_n$, $n\in \mathbb N$, and $f$ by
$$f_n(t) = \begin{cases} A_t, & t\in [k, k+1], k\le n \\ A_n, & t\ge n.
\end{cases}$$ and $$f(t) = \begin{cases} A_t, & t\in [k, k+1] \\ A, & t = \infty.
\end{cases}$$ The functions $f_n$, $n\in \mathbb N$, are obviously continuous, and since $A_n \to A$, $f$ is also continuous. Thus $f_n$ and $f$ belong to $T(M((2^{\infty}))$. Since all $A_n$'s are lower-triangular, so are $A_t$, for each $t\in (0, \infty).$ Hence for each $n$ and for each $t$, $f_n(t)$ is nilpotent of order $2^n$. Hence for any $n\in \mathbb N$, $f_n$ is nilpotent. Clearly   $f_n \to f$ and by Lemma \ref{1}
\begin{equation}\label{new}\rho(f) \ge \rho(f(\infty)) = \rho(A) >0.\end{equation}
Let $$a_n = \phi(f_n), \; a = \phi(f).$$ Then $a_n\to a$ and each $a_n$ is nilpotent. By (\ref{eq5}), $q(a) = f(\infty)$. From this and (\ref{new}) we obtain  $$\rho(a) \ge \rho(q(a)) = \rho(f(\infty)) >0.$$

\qed

\bigskip

{\it Proof of Theorem A.} The "if" part was proved in [\cite{PapaYura}, Cor.9.18].  The "only if" part follows from Theorem B. \qed

\medskip

\begin{corollary} Let $\mathcal A$ be a $C^*$-algebra. The following are equivalent:

1) Each precompact set of $\mathcal A$ is a point of continuity of the joint spectral radius,

2) $\mathcal A$ is type I.
\end{corollary}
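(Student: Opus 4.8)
The plan is to obtain this equivalence as a direct consequence of Theorem~A, by specializing the joint spectral radius to one-point sets, where it collapses to the ordinary spectral radius. No new construction is needed beyond the observation that singletons sit isometrically inside the space of precompact sets.

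First I would record the two elementary facts that make the reduction work. For a single element $a\in\mathcal A$, the one-point set $\{a\}$ is trivially precompact, and its joint spectral radius is $\lim_n\|a^n\|^{1/n}=\rho(a)$, the ordinary spectral radius. Moreover the Hausdorff distance between $\{a\}$ and $\{b\}$ equals $\|a-b\|$, so the family of singletons, viewed inside the metric space of precompact subsets of $\mathcal A$ equipped with the Hausdorff metric, is isometric to $\mathcal A$ itself; under this identification the joint spectral radius restricts exactly to the ordinary spectral radius.

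For the implication $(1)\Rightarrow(2)$, assume that every precompact set is a point of continuity of the joint spectral radius. In particular every singleton $\{a\}$ is such a point. Continuity of a function at a point of the ambient metric space forces continuity of its restriction to any subspace, so, combined with the isometric identification above, this yields that $\rho$ is continuous at every $a\in\mathcal A$, that is, continuous on $\mathcal A$. By Theorem~A, $\mathcal A$ is type~I. For the reverse implication $(2)\Rightarrow(1)$, I would invoke the theorem of Shulman and Turovskii that on a type~I $C^*$-algebra every precompact set is a point of continuity of the joint spectral radius, of which the scalar (one-point) statement is their Cor.~9.18; see \cite{PapaYura}. Thus this direction is already available in the literature and needs no fresh argument.

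The only genuine mathematical content lies in the forward direction $(1)\Rightarrow(2)$, and the single obstacle there is precisely the step that continuity of the ordinary spectral radius forces $\mathcal A$ to be type~I — which is exactly Theorem~A. Once Theorem~A is granted, no further difficulty arises: the passage from the joint to the ordinary spectral radius is the trivial restriction to singletons described above, and the equivalence follows.
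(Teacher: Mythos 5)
Your proposal is correct and follows essentially the same route as the paper: the direction $(2)\Rightarrow(1)$ is delegated to the Shulman--Turovskii result (the paper cites \cite{PapaYura}, Cor.~10.31), and $(1)\Rightarrow(2)$ is deduced from Theorem~A via the restriction to singletons, which the paper leaves implicit and you spell out. The singleton reduction you describe (Hausdorff distance on one-point sets recovering the norm, and the joint spectral radius collapsing to $\rho$) is exactly the content behind the paper's one-line ``follows from Theorem~A.''
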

\begin{proof} 2) $\Rightarrow$ 1) is proved in [\cite{PapaYura}, Cor. 10.31].

1) $\Rightarrow$ 2) follows from Theorem A.
\end{proof}

\bigskip

It would be interesting to characterize the class of $C^*$-algebras for which the closure of nilpotents contains a normal element.
The following is an easy observation.

\begin{proposition} If the closure of nilpotents in a $C^*$-algebra $\mathcal A$
contains a normal element, then $\mathcal A$ is not residually type I.
\end{proposition}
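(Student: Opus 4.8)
The plan is to argue by contraposition, reducing to the type I case through a separating family of type I quotients and then invoking Theorem B. Suppose the closure of nilpotents in $\mathcal{A}$ contains a normal element $a$ that is nonzero (note that a normal element with zero spectrum is itself zero, so the content of the statement is exactly about nonzero normal limits of nilpotents). Write $a = \lim_n a_n$ with each $a_n$ nilpotent. The first thing I would record is the one elementary fact that makes normality essential: for a normal element the spectral radius equals the norm, so $\rho(a) = \|a\| > 0$, i.e.\ $a$ has nonzero spectrum.

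Next I would unwind the definition of residual type I. To say that $\mathcal{A}$ is residually type I means precisely that $\mathcal{A}$ admits a separating family of type I quotients (equivalently, an injective $\ast$-homomorphism into a product of type I $C^*$-algebras). Assuming toward a contradiction that $\mathcal{A}$ is residually type I, the fact that $a \neq 0$ forces the existence of a single surjective $\ast$-homomorphism $q : \mathcal{A} \to \mathcal{B}$ with $\mathcal{B}$ type I and $q(a) \neq 0$.

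Then I would push everything through $q$. Since $\ast$-homomorphisms preserve the identities $x^*x = xx^*$ and $x^k = 0$, the image $q(a)$ is again normal and each $q(a_n)$ is nilpotent; moreover $q(a) = \lim_n q(a_n)$ by continuity of $q$. Hence $q(a)$ is a nonzero normal element lying in the closure of nilpotents of $\mathcal{B}$, and by normality once more $\rho(q(a)) = \|q(a)\| > 0$, so $q(a)$ has nonzero spectrum. But $\mathcal{B}$ is type I, so by Theorem B its closure of nilpotents contains no element with nonzero spectrum, a contradiction. The argument has essentially no obstacle: the only point that must be handled with care is the appeal to normality, since it is exactly the equality $\rho = \|\cdot\|$ for normal elements that guarantees $q(a)$ retains nonzero spectrum after passing to the quotient, which is what allows Theorem B to be applied to $\mathcal{B}$.
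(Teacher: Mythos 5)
Your proof is correct and takes essentially the same route as the paper: both push the nilpotents and their normal limit through a $\ast$-homomorphism into a type I algebra and use normality (spectral radius equals norm, so a quasinilpotent normal element is zero) to conclude, relying on the fact that limits of nilpotents in a type I algebra are quasinilpotent. The only cosmetic differences are that you argue by contradiction with a single separating map and cite the ``only if'' direction of Theorem B, whereas the paper argues directly that every $\ast$-homomorphism into a type I algebra kills the normal element, citing the continuity of the spectral radius on type I algebras --- which is precisely the fact the paper uses to prove that direction of Theorem B.
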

(By residually type I $C^*$-algebra we mean a $C^*$-algebra that has a separating family of $\ast$-homomorphisms into type I $C^*$-algebras).
\begin{proof} Suppose $N\in \mathcal A$ is normal, $a_n\in \mathcal A$ are nilpotents and $a_n \to N$. Then, by Theorem A (in fact by its "if" part which is proved in \cite{PapaYura}),   for any $\ast$-homomorphism $\rho$ to a type I $C^*$-algebra,
$\rho(N)$ is  quasinilpotent. Since it is also normal, we conclude that $\rho(N)=0$.
\end{proof}

\end{document}